\newtheorem{theorem}{Theorem}[section]
\newtheorem{lemma}[theorem]{Lemma}
\newtheorem*{comPStheorem}{Theorem \ref{th:comPS}}
\theoremstyle{remark}
\newcommand{\RR}{\mathbb{R}}
\newcommand{\CC}{\mathbb{C}}
\newcommand{\BZ}{\mathbf{Z}}
\newcommand{\pts}{\mathcal P}
\newcommand{\vrts}{\mathcal V}
\newcommand{\curves}{\mathcal C}
\newcommand{\eps}{\varepsilon}
\newcommand{\smooth}{\operatorname{smooth}}
\newcommand{\sing}{\operatorname{sing}}
\newcommand{\reg}{\operatorname{reg}}
\newcommand{\ignore}[1]{}
\begin{document}
\pagenumbering{arabic}
\title{Point-curve incidences in the complex plane}

\author{
Adam Sheffer\thanks{California Institute of Technology, Pasadena, CA,
{\sl adamsh@gmail.com}.}
\and
Endre Szab\'o\thanks{Alfr\'ed R\'enyi Institute of Mathematics,
  Budapest, {\sl szabo.endre@renyi.mta.hu}.
}
\and
Joshua Zahl\thanks{University of British Columbia, Vancouver, BC,  {\sl jzahl@math.ubc.ca}.}
}
\date{\today}

\maketitle

\begin{abstract}
We prove an incidence theorem for points and curves in the complex plane. Given a set of $m$ points in $\RR^2$ and a set of $n$ curves with $k$ degrees of freedom, Pach and Sharir proved that the number of point-curve incidences is $O\big(m^{\frac{k}{2k-1}}n^{\frac{2k-2}{2k-1}}+m+n\big)$. We establish the slightly weaker bound $O_\eps\big(m^{\frac{k}{2k-1}+\eps}n^{\frac{2k-2}{2k-1}}+m+n\big)$ on the number of incidences between $m$ points and $n$ (complex) algebraic curves in $\CC^2$ with $k$ degrees of freedom. We combine tools from algebraic geometry and differential geometry to prove a key technical lemma that controls the number of complex curves that can be contained inside a real hypersurface. This lemma may be of independent interest to other researchers proving incidence theorems over $\CC$.
\end{abstract}
\maketitle
\section{Introduction}
Let $\pts$ be a set of points and let $\vrts$ be a set of geometric objects (for example, one might consider lines, circles, or planes) in a vector space $K^d$ over a field $K$. An \emph{incidence} is a pair $(p,V)\in \pts \times \vrts$ such that the point $p$ is contained in the object $V$. In incidence problems, one is usually interested in the maximum number of incidences in $\pts \times \vrts$, taken over all possible sets $\pts,\vrts$ of a given size. For example, the well-known Szemer\'edi-Trotter Theorem \cite{Szemeredi} states that any set of $m$ points and  $n$ lines in $\RR^2$ must have $O(m^{2/3}n^{2/3}+m+n)$ incidences.

Incidence theorems have a large variety of applications. For example, in the last few years they have been used by Guth and Katz \cite{Guth} to almost completely settle Erd\H os' distinct distances problem in the plane; by Bourgain and Demeter \cite{BD13,BD14} to study restriction problems in harmonic analysis; by Raz, Sharir, and Solymosi \cite{RSS14} to study expanding polynomials; and by Farber, Ray, and Smorodinsky \cite{FRS13} to study properties of totally positive matrices.

\subsection{Previous work}
We will be concerned with the number of incidences between points and various classes of curves. Later, we will define several different types of curves, but for the definition below one can think of a curve as merely a subset of $K^2$, where $K$ is either $\RR$ or $\CC$.

Let $\mathcal{C}$ be a set of curves in $K^2$ and let $\pts$ be a set of points in $K^2$. We say that the arrangement $(\pts,\mathcal{C})$ has $k$ \emph{degrees of freedom} and \emph{multiplicity type} $s$ if
\begin{itemize}
 \item For any subset $\pts'\subset\pts$ of size $k$, there are at most $s$ curves from $\curves$ that contain $\pts'$.
 \item Any pair of curves from $\curves$ intersect in at most $s$ points from $\pts$.
\end{itemize}

We will use $I(\pts,\curves)$ to denote the number of incidences between the points in $\pts$ and curves in $\curves$.
The current best bound for incidences between points and general curves in $\RR^2$ is the following (better bounds are known for some specific types of curves, such as circles and parabolas\footnote{Recently, Sharir and the third author obtained an improvement \cite{SZ} to Theorem \ref{th:PS} whenever the curves are algebraic.}).

\begin{theorem}[Pach and Sharir \cite{PS}] \label{th:PS}
Let $\pts$ be a set of $m$ points in $\RR^2$ and let $\curves$ be a set of $n$ simple plane curves. Suppose that $(\pts,\curves)$ has $k$ degrees of freedom and multiplicity type $s$. Then
\begin{equation*}
I(\pts,\curves) = O_{k,s}\big(m^{\frac{k}{2k-1}}n^{\frac{2k-2}{2k-1}}+m+n\big).
\end{equation*}
\end{theorem}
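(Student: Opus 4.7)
The hypotheses imply that the bipartite incidence graph of $\pts\times\curves$ contains no copy of $K_{k,s+1}$ with the $k$-part in $\pts$: any $k$ points of $\pts$ lie on at most $s$ common curves of $\curves$ by the degree-of-freedom assumption. A direct application of the K\H{o}v\'ari--S\'os--Tur\'an theorem therefore yields the baseline estimate
\[
I(\pts,\curves)=O_{k,s}\bigl(m\,n^{1-1/k}+n\bigr).
\]
This is already enough in the regime $m\le n^{1/k}$, where the second summand dominates and gives $O(n)$.

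To handle larger $m$, I would amplify the bound via a cell decomposition following the Clarkson--Shor random sampling paradigm. The plan is to construct a $(1/r)$-cutting of the arrangement of $\curves$: a partition of $\RR^2$ into $O(r^2)$ constant-complexity (pseudo-trapezoidal) cells such that each cell meets at most $n/r$ curves. The fact that the curves are simple, so that any two of them meet in $O(1)$ points, is what makes the standard vertical-decomposition argument yield cells of bounded complexity. After an arbitrarily small perturbation we may assume no point of $\pts$ lies on the $1$-skeleton of the cutting, so that $\sum_\tau m_\tau = m$.

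Applying K\H{o}v\'ari--S\'os--Tur\'an inside each cell $\tau$, with $m_\tau$ points and $n_\tau\le n/r$ curves, and summing over all $O(r^2)$ cells yields
\[
I(\pts,\curves)\;\le\;\sum_\tau O\bigl(m_\tau(n/r)^{1-1/k}+n/r\bigr)\;=\;O\bigl(m(n/r)^{1-1/k}+nr\bigr).
\]
Balancing the two terms forces $r\asymp m^{k/(2k-1)}n^{-1/(2k-1)}$, which after substitution produces the main term $m^{k/(2k-1)}n^{(2k-2)/(2k-1)}$. The corner case $r\lesssim 1$ (that is, $m\lesssim n^{1/k}$) is already absorbed into the $+n$ summand from KST; the opposite corner, where $m$ is huge compared to $n^{2k-1}$ and the optimal $r$ would be unreasonably large, is handled by the trivial $+m$ term.

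I expect the construction of the cutting to be the main obstacle. For curves of bounded algebraic degree this is classical Clarkson--Shor theory, but for arbitrary simple curves one must verify that the random-sampling-plus-vertical-decomposition machinery still delivers cells of bounded combinatorial complexity; this uses the pairwise intersection bound coming from simplicity (and, via the multiplicity type, the bound on how many of those intersections actually lie in $\pts$). Once that geometric input is in place, everything afterwards is the routine combinatorial computation sketched above.
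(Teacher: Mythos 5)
The paper does not prove Theorem~\ref{th:PS} at all: it is quoted directly from Pach and Sharir~\cite{PS} and used as a black box (in particular, Lemma~\ref{1DcurveIncidenceBd} invokes the algebraic variant, Theorem~\ref{th:PSAlg}). So there is no in-paper argument to measure your sketch against; I can only assess the sketch on its own.

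Your overall strategy --- a K\H{o}v\'ari--S\'os--Tur\'an base estimate amplified through a $(1/r)$-cutting, with the two resulting terms balanced to get the exponent $\frac{k}{2k-1}$ --- is indeed the standard route to this bound, and the balancing arithmetic is correct. Two points, however, are glossed over in a way that matters. First, the multiplicity-type hypothesis never actually enters your argument, but it is what you need to control the $1$-skeleton of the cutting: the skeleton is built from $O(r)$ sampled curves, and the points of $\pts$ lying on those curves cannot be pushed into open cells by ``perturbation'' --- they are genuine incidences that must be counted. Bounding them uses the dual K\H{o}v\'ari--S\'os--Tur\'an estimate, $I=O(nm^{1/2}+m)$, which comes precisely from ``any two curves share at most $s$ points of $\pts$.'' The same dual bound is what supplies the $+m$ term; your heuristic ``$m\gtrsim n^{2k-1}$'' for when this term is needed is off (the correct threshold is $m\gtrsim n^{2}$, where the balanced $r$ would exceed $n$). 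Second, and more fundamentally, the cutting lemma itself --- $O(r^2)$ cells, each meeting $O(n/r)$ of the curves, for arbitrary simple curves rather than algebraic ones --- is not a routine Clarkson--Shor exercise; it is essentially the technical content of the Pach--Sharir paper. You rightly flag this as the main obstacle, but it should not be described as something one merely ``expects'' to go through: it is where the work lives, and it is the reason the theorem is nontrivial for general Jordan arcs.
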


If the curves are algebraic, then we can drop the requirement that the curves are simple (however, the implicit constant will now depend on the degree of the curves). This special case was proved several years earlier than Theorem \ref{th:PS}. The proof follows from the techniques in \cite{Clarkson}, and appears explicitly in \cite{PS92}.
\begin{theorem}[Pach and Sharir \cite{PS92,PS}] \label{th:PSAlg}
Let $\pts$ be a set of $m$ points in $\RR^2$ and let $\curves$ be a set of $n$ algebraic curves of degree at most $D$.  Suppose that $(\pts,\curves)$ has $k$ degrees of freedom and multiplicity type $s$. Then
\begin{equation} \label{PSCTheoremEqn}
I(\pts,\curves) = O_{k,s,D}\big(m^{\frac{k}{2k-1}}n^{\frac{2k-2}{2k-1}}+m+n\big).
\end{equation}

\end{theorem}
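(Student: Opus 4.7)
The plan is to combine the K\H{o}v\'ari-S\'os-Tur\'an (KST) bound with a cutting-based divide-and-conquer argument, following the Clarkson-Shor framework invoked in \cite{Clarkson}.

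I would first establish the baseline. The $k$ degrees of freedom hypothesis says exactly that the bipartite incidence graph of $(\pts,\curves)$ contains no copy of $K_{k,s+1}$, since any $k$ points lie on at most $s$ common curves. A standard double-counting argument gives
\[
\sum_{C\in\curves} \binom{d(C)}{k} \le s\binom{m}{k},
\]
where $d(C)$ denotes the number of points of $\pts$ on $C$; the power-mean inequality then converts this into the baseline incidence bound
\[
I(\pts,\curves) = O_{k,s}\bigl(m\,n^{1-1/k} + n\bigr).
\]
This is already of the desired form but weaker than the target for most ranges of $(m,n)$.

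To sharpen it, I would introduce a $(1/r)$-cutting. Choose a parameter $r\in[1,n]$ and take a uniform random sub-sample $\mathcal R\subset\curves$ of size $r$. Because every curve of $\curves$ has degree at most $D$, B\'ezout's theorem implies that any two of them meet in at most $D^2$ points. The Clarkson-Shor random-sampling construction then produces a subdivision of $\RR^2$ into $O_D(r^2)$ pseudo-trapezoidal cells, each crossed by at most $n/r$ curves of $\curves$. Assign each point of $\pts$ to a single cell whose closure contains it, and write $m_i$ and $n_i\le n/r$ for the number of points so assigned and the number of curves meeting the $i$th cell. Applying the baseline bound inside each cell and summing yields
\[
I(\pts,\curves) = O_{k,s,D}\bigl(m(n/r)^{1-1/k} + nr\bigr),
\]
using $\sum m_i\le m$ and the count of $O_D(r^2)$ cells. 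Balancing the two terms by choosing $r\asymp m^{k/(2k-1)}n^{-1/(2k-1)}$ (valid when $n^{1/k}\le m \le n^2$) yields the claimed bound $O(m^{k/(2k-1)}n^{(2k-2)/(2k-1)})$, while the regimes $m<n^{1/k}$ and $m>n^2$ are absorbed by the additive $n$ and $m$ terms: the former is immediate from the baseline, and the latter follows from the symmetric KST estimate together with the $s$-bound on pairwise intersections.

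The technical nuisance is that points of $\pts$ lying on a sample curve sit on cell boundaries, so their incidences with non-sample curves must still be attributed to some cell. The standard fix is to bound $I(\pts,\mathcal R)$ separately via KST applied to $\pts$ and the $r$ sample curves, a contribution easily dominated by the target for our choice of $r$, and then to assign each boundary point to a unique adjacent cell when counting incidences with the remaining curves. The main obstacle to making this sketch rigorous is the construction of the $(1/r)$-cutting with the stated parameters; this is standard for bounded-degree algebraic curves thanks to the uniform B\'ezout intersection bound, but requires the Clarkson-Shor machinery with careful bookkeeping to keep all constants depending only on $k$, $s$, and $D$.
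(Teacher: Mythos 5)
The paper does not prove Theorem \ref{th:PSAlg} at all---it is quoted from Pach--Sharir and attributed to the cutting techniques of Clarkson et al.---and your proposal is precisely that standard argument: the K\H{o}v\'ari--S\'os--Tur\'an baseline from the $K_{k,s+1}$-free incidence graph, a $(1/r)$-cutting into $O_D(r^2)$ cells, and the balance $r\asymp m^{k/(2k-1)}n^{-1/(2k-1)}$, with the extreme ranges absorbed into the linear terms. Your computations (baseline, summation over cells, choice of $r$, and the $m<n^{1/k}$ and $m>n^2$ regimes) all check out, so this is a correct reconstruction of the proof the paper cites.
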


Less is known about point-curve incidences in the complex plane. If we
add the additional requirement that pairs of curves must intersect
transversely\footnote{That is, whenever two complex curves intersect
  at a smooth point of both curves, their complex tangent lines at the
  point of intersection are distinct.}, then an analogue of Theorem
\ref{th:PSAlg} can be proved using the techniques of Solymosi-Tao from
\cite{ST}, although these methods introduce an $\eps$ loss in the
exponent. Previously, T\'oth \cite{Toth14} proved the important
special case where the curves in $\curves$ are lines. This was
generalized by the third author in \cite{zahl}, who proved a bound
analogous to that in Theorem \ref{th:PSAlg} for complex
curves. However, in addition to the requirement that curves intersect
transversely, the results of \cite{zahl} have an additional
restriction on the relative sizes of $\pts$ and $\curves$, and they
require that the curves be smooth.
  Elekes and the second author \cite[Theorem 9]{elekes2012find}
  proved Pach-Sharir-like estimates for arbitrary complex subvarieties in $\CC^d$,
  but their exponent is far from optimal.
Finally, Dvir and Gopi \cite{DG15}
and the third
author \cite{zahl2} considered incidences between points
and lines in $\CC^d$, for any $d\ge 3$.

Asking for the curves to intersect transversely is rather restrictive; some of the simplest cases such as incidences with circles or parabolas do not satisfy this requirement. If we do not require that pairs of curves intersect transversely, then much less is known. Very recently, Solymosi and de Zeeuw \cite{SdZ14} proved a complex analog of Theorem \ref{th:PSAlg}, but only for the special case where the point set is a Cartesian product $A\times B \subset \CC$. This bound has already been used to prove several results---see \cite{RSZ14,VdZ14}.

\subsection{New results}
We obtain a complex analogue of Theorem \ref{th:PSAlg}, although our version introduces an $\eps$ loss in the exponent.

\begin{theorem} \label{th:comPS}
For each $k\geq 1, D\geq 1,\ s\geq 1,$ and $\epsilon>0$, there is a constant $C=C_{\epsilon,D,s,k}$ so that the following holds. Let $\pts\subset\CC^2$ be a set of $m$ points and let $\curves$ be a set of $n$ complex algebraic curves of degree at most $D$. Suppose that $(\pts,\curves)$ has $k$ degrees of freedom and multiplicity type $s$. Then
\begin{equation}\label{comPSThmEqn}
I(\pts,\curves) \le C\big(m^{\frac{k}{2k-1}+\eps}n^{\frac{2k-2}{2k-1}}+m+n\big).
\end{equation}
\end{theorem}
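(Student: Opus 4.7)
The plan is to identify $\CC^2$ with $\RR^4$ and run the real polynomial partitioning method of Guth--Katz, following the template Solymosi and Tao used in the transverse setting \cite{ST}. I would argue by induction on $m+n$, with the base case $m+n = O_{\eps,k,D,s}(1)$ handled by the trivial estimate $I(\pts,\curves) \le mn$. For the inductive step, fix a large constant $r = r(\eps, k, D, s)$ and apply real polynomial partitioning to $\pts$ viewed as a subset of $\RR^4$, producing a polynomial $P \in \RR[x_1,\ldots,x_4]$ of degree at most $r$ so that $\RR^4 \setminus Z(P)$ has $O(r^4)$ connected components (``cells''), each containing $O(m/r^4)$ points of $\pts$.

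Split $\curves = \curves_0 \sqcup \curves_1$, where $\curves_0$ consists of those curves contained entirely in $Z(P)$. Each curve in $\curves_1$, viewed as a real algebraic surface of real dimension $2$ and real degree $O(D)$, meets $Z(P)$ in a real $1$-dimensional set, and real Bezout/Milnor--Thom estimates show that such a curve enters only $O_D(r^2)$ cells and accounts for only $O_D(r)$ incidences on $Z(P)$ itself. Applying the inductive hypothesis cell by cell and balancing with H\"older's inequality, a direct computation (the exponent of $r$ works out to exactly $-4\eps$) yields a cellular contribution of the form $C'\, r^{-4\eps}\, m^{k/(2k-1)+\eps}\, n^{(2k-2)/(2k-1)}$, i.e.\ the main term of \eqref{comPSThmEqn} with a saving factor of $r^{-4\eps}$. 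Incidences of $\curves_1$ along $Z(P)$ itself contribute only $O_D(rn) = O(n)$, which is absorbed in the $+n$ term.

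The main obstacle, and the content of the key lemma promised in the abstract, is the contribution of $\curves_0$: since $Z(P)$ has real dimension $3$ while each complex curve has real dimension $2$, dimension counting alone cannot prevent $|\curves_0|$ from being enormous. The technical lemma must produce a bound $|\curves_0| \le N(r,D)$ depending only on $r$ and $D$; the essential geometric input is that a complex curve $C$ lying inside a real hypersurface is extraordinarily rigid, because at every smooth point the complex tangent line of $C$ must sit inside the real tangent hyperplane of $Z(P)$, and this ``$J$-invariance'' of the real tangent spaces along $C$ is an overdetermined condition that should force only finitely many such $C$ of bounded degree, with effective bounds in $r$ and $D$. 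Combining real algebraic geometry with a differential-geometric analysis of the Cauchy--Riemann structure along such a $C$ appears to be the natural route, and is presumably the hardest step of the paper. Once the lemma is in place, the contribution of $\curves_0$ is at most $|\pts| \cdot |\curves_0| = O_{r,D}(m) = O(m)$, absorbed in the $+m$ term, and the $r^{-4\eps}$ savings in the cellular estimate absorb all the implicit constants, closing the induction. The $\eps$ loss in the exponent is unavoidable in this scheme precisely because the absorption requires $r$ to be chosen as a fixed constant in $m$ and $n$, rather than as a growing power, exactly as in the Solymosi--Tao iterated partitioning argument.
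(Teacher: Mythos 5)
Your overall architecture (identify $\CC^2$ with $\RR^4$, partition, induct on cells, isolate the curves trapped in the partitioning hypersurface) matches the paper, but the key lemma you propose is false, and a second step also breaks down. You ask for a bound $|\curves_0|\le N(r,D)$ on the number of bounded-degree complex curves contained in $\BZ_{\RR}(P)$. No such bound exists: take $P=y_2$, so that $\BZ_{\RR}(P)=\{(x_1,y_1,x_2,0)\}$; then for every $c\in\RR$ the complex line $\{z_2=c\}$ has image $\{(x_1,y_1,c,0)\}\subset\BZ_{\RR}(P)$, giving a continuum of degree-one curves inside a degree-one real hypersurface. The "overdetermined'' Cauchy--Riemann condition does not force finiteness; what it forces (and what the paper proves via Frobenius' theorem) is that the tangent plane of any such curve at a point $p$ must equal the unique $J$-invariant $2$-plane $E_p=T_pM\cap J^{-1}(T_pM)$, so all such curves are leaves of a single foliation of $\BZ_{\RR}(P)$ minus a bounded-degree exceptional set $Y$ of dimension at most two. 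Leaves through a given point are unique, hence the curves of $\curves_0$ are pairwise disjoint off $Y(\RR)$ and contribute at most $m_0$ incidences there --- a disjointness statement, not a finiteness statement. (Finiteness is only invoked for the $O_r(1)$ curves contained in the two-dimensional variety $Y$.) With your lemma replaced by its correct form, the $\curves_0$ contribution is still absorbed into $O(m)$, but the proof of the lemma is genuinely the foliation argument, not a counting argument.

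The second gap is your claim that a curve $\gamma\in\curves_1$ "accounts for only $O_D(r)$ incidences on $Z(P)$ itself.'' The set $\iota(\gamma)\cap\BZ_{\RR}(P)$ is a real \emph{one-dimensional} variety (a $2$-surface meeting a $3$-hypersurface), not a finite set, so B\'ezout/Milnor--Thom gives no bound on how many of the $m_0$ points it contains; the total over all of $\curves_1$ cannot be dismissed as $O(n)$. The paper handles this by projecting $\RR^4\to\RR^2$ generically, so that each $\pi(\iota(\gamma)\cap\BZ_{\RR}(P))$ becomes a bounded-degree plane curve, and then invoking the real Pach--Sharir bound (Theorem \ref{th:PSAlg}) to get $O(m_0^{k/(2k-1)}n^{(2k-2)/(2k-1)}+m_0+n)$ for these incidences; the same device is needed for the points of $\pts_0$ lying on $Y(\RR)$. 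Both terms enter the induction with the full Pach--Sharir shape, not merely as $O(n)$, so your closing step must also account for them.
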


The new improvement is that Theorem \ref{th:comPS} does not require the curves to intersect transversely.
The main new tool in the proof is the Picard--Lindel\"of theorem.

\ignore{ 
\subsection{Proof sketch}
Each point of $\pts$ can be regarded as a point in $\RR^4$, and each curve of $\curves$ can be regarded as a two-dimensional variety in $\RR^4$. Thus the problem is reduced to bounding the number of incidences between a set of points in $\RR^4$ and a set $\mathcal{S}$ of two-dimensional surfaces in $\RR^4$. If every pair (or at least most pairs) of surfaces in $\mathcal{S}$ intersect transversely, then the bound \eqref{comPSThmEqn} can be obtained by using the techniques of Solymosi and Tao from \cite{ST}. However, if many pairs of surfaces in $\mathcal{S}$ fail to intersect transversely, then the techniques from \cite{ST} do not apply.

Luckily, the surfaces in $\mathcal{S}$ are special---they come from complex curves in $\CC^2$. More precisely, the surfaces in $\mathcal{S}$ are the images of complex curves in $\CC^2$ under the usual embedding of $\CC^2$ into $\RR^4$.
As we will show below, this means that the only way that many surfaces in $\mathcal{S}$ can lie in a common low-degree hypersurface $Z$ is if they intersect only in singular points of $Z$.
} 

\section{Preliminaries}\label{algGeoToolsSec}
\subsection{Varieties and ideals} \label{ssec:Quant}

In this paper we work over the fields $\RR$ and $\CC$.
Let $K=\RR$ or $\CC$.
Varieties are (possibly reducible) Zariski closed subsets of $K^d$.
If $X\subset K^d$ is a set, let $\overline{X}$
be the Zariski closure of $X$; this is the smallest variety in $K^d$
that contains $X$.

If $Z\subset\RR^d$ is a variety, let $Z^*\subset\CC^d$ be the smallest complex variety containing $Z$; i.e., $Z^*$ is obtained by embedding $Z$ into $\CC^d$ and then taking the Zariski closure.
If $Z\subset\CC^d$, let $Z(\RR)\subset\RR^d$ be the set of real points of $Z$.
We also identify $\CC^2$ with $\RR^4$ using the map $\iota(x_1+iy_1,x_2+iy_2)=(x_1,y_1,x_2,y_2)$ (where $x_1,y_1,x_2,y_2\in \RR$).
If $\mathcal{C}$ is a set of curves in $\CC^2$, we define $\iota(\mathcal{C})=\{\iota(\gamma)\colon\gamma\in\mathcal{C}\}$.

If $Z\subset K^d$ is a variety, let $I(Z)$ be the ideal of polynomials in $K[x_1,\ldots,x_d]$ that vanish on $Z$. If $I\subset K[x_1,\ldots,x_d]$ is an ideal, let $\BZ(I)\subset K^d$ be the intersection of the zero-sets of all polynomials in $I$. Sometimes it will be ambiguous whether an ideal is a subset of $\RR[x_1,\ldots,x_d]$ or $\CC[x_1,\ldots,x_d]$. To help resolve this ambiguity, we will write $\BZ_{\RR}(I)$ or $\BZ_{\CC}(I)$. If $P\in K[x_1,\ldots,x_d]$ is a polynomial, we abuse notation and write $\BZ(P)$ instead of $\BZ((P))$. If $I\subset\CC[x_1,\ldots,x_d]$ is an ideal, we use $\sqrt{I}=I(\BZ(I))$ to denote the radical of $I$.

Often in our arguments we will refer to properties that hold for most points on a variety. To make this precise, we will introduce the notion of a generic point.
Let $Z\subset\CC^d$ be an irreducible variety, and let $M$ be a finite set of polynomials, none of which vanish on $Z$. We say that a point $z\in Z$ is \emph{generic} (with respect to $M$) if none of the polynomials in $M$ vanish at $z$. In particular, for $Z$ and $M$ fixed, the set of generic points of $Z$ is Zariski dense in $Z$.

In practice, the set of polynomials will be apparent from context, so we will abuse notation and simply refer to generic points. In general, the set of polynomials $M$ will depend on the variety $Z$, the points and curves from the statement of Theorem \ref{th:comPS}, any previously defined objects, and whatever property is currently under consideration.

If $Z(\RR)$ is Zariski dense in $Z$, then we define a generic real point of $Z(\RR)$ to be a point $z\in Z(\RR)$ for which no polynomial in $M$ vanishes. In particular, if $Z(\RR)$ is dense in $Z$, then $Z$ always contains a generic real point.

Finally, we will sometimes refer to generic linear spaces or generic linear transformations. A generic linear space of dimension $\ell$ in $\CC^d$ is a generic point of the Grassmannian of $\ell$-dimensional vector spaces in $\CC^d$. Similarly, a generic linear transformation in $\CC^d$ is a generic point of $\operatorname{GL}(\CC,d)$.

The degree of an irreducible affine variety $V \subset\CC^d$ of dimension $d^\prime$ is the number of points of the intersection of $V$ with a generic linear space of dimension $d-d^\prime$ (for several equivalent definitions, see \cite[Chapter 18]{Harris}).
We define the degree of a reducible variety $V$ as the sum of the degrees of the irreducible components of $V$ (note that these components may have different dimension). In practice, we are only interested in showing that the degrees of various varieties are bounded, so the specific definition of degree is not too important.

\begin{lemma}[Varieties and their defining ideals]\label{varDefIdeal}
Let $Z\subset\CC^d$ be a variety of degree $C$.
Then there exist polynomials $f_1,\ldots,f_{\ell}$ such that $(f_1,\ldots,f_{\ell})=I(X)$ and $\sum_{j=1}^{\ell} \deg f_j = O_{C,d}(1)$.
\end{lemma}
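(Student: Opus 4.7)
The plan is a two-step reduction: first to irreducible components, and then to the known boundedness of the ideal of an irreducible projective variety of bounded degree.

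For Step 1, I would decompose $Z = Z_1 \cup \cdots \cup Z_r$ into irreducible components. The definition of degree used in the paper (sum of component degrees) gives $r \leq C$ and $\deg Z_i \leq C$ for each $i$. Since $I(Z) = I(Z_1) \cap \cdots \cap I(Z_r)$, it suffices to bound the generator degrees of each $I(Z_i)$ individually and then control the generator degrees of a finite intersection of ideals. The latter follows from the standard elimination-theoretic construction $I \cap J = \bigl(tI + (1-t)J\bigr) \cap \CC[x_1,\ldots,x_d]$, combined with effective (Dub\'e-type) Gr\"obner-basis degree bounds: intersecting at most $C$ ideals whose generators have degree $\leq D'(C,d)$ yields an ideal generated in degree $\leq D''(C,d)$.

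For Step 2, let $W\subset\CC^d$ be irreducible of degree at most $C$. I would pass to its projective closure $\overline{W}\subset\mathbb{P}^d$, which still has degree $\leq C$, and invoke a Castelnuovo--Mumford regularity bound (for instance, the Bayer--Mumford bound) to conclude that the homogeneous ideal $I(\overline{W})\subset\CC[x_0,\ldots,x_d]$ is generated in degree $\leq D'(C,d)$; dehomogenizing gives the desired generators of $I(W)$. An alternative route is to appeal to Grothendieck's boundedness of the Hilbert scheme: only finitely many Hilbert polynomials are compatible with an irreducible subvariety of $\mathbb{P}^d$ of degree $\leq C$, and each corresponding Hilbert scheme is projective of finite type, so a uniform regularity bound, and hence a uniform generator-degree bound, exists.

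The main obstacle is Step 2, which ultimately rests on a genuinely nontrivial algebraic-geometry input (regularity bounds, or Hilbert-scheme boundedness). Since the lemma only asks for a qualitative $O_{C,d}(1)$ bound, however, no explicit constants need to be tracked, and any of these classical theorems suffices to close the argument. A small secondary point is that one must verify that the definition of degree for reducible varieties used in the paper is compatible with the one appearing in these boundedness statements; this is immediate from the additivity over components built into the paper's definition.
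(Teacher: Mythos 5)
Your argument is correct, but it is not the route the paper takes. The paper invokes \cite[Theorem A.3]{BGT} to produce polynomials $g_1,\ldots,g_t$ of bounded total degree with $I(Z)=\sqrt{(g_1,\ldots,g_t)}$, and then passes from these set-theoretic defining equations to actual generators of $I(Z)$ by effective radical computation, using Dub\'e's degree bound for reduced Gr\"obner bases together with the algorithm of \cite[Section 9]{GTZ}; no decomposition into irreducible components is needed. You instead split $Z$ into its $\le C$ irreducible components, obtain bounded-degree generators for each prime ideal $I(Z_i)$ from a uniform Castelnuovo--Mumford regularity bound (or boundedness of the family of reduced irreducible subvarieties of $\mathbb{P}^d$ of degree $\le C$), and recombine via the elimination description of $I\cap J$ with Dub\'e-type bounds. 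Both routes ultimately lean on Dub\'e for the ideal-theoretic manipulation, but your geometric input (uniform regularity for irreducible varieties of bounded degree) is heavier and more classical, while the paper's input (BGT plus effective radicals) is more computational and avoids component decomposition entirely. Two small cautions on your version: the regularity bound you need must be one valid for arbitrary reduced irreducible varieties (the doubly-exponential bounds of Bayer--Mumford type, or the boundedness-of-families argument) --- the linear Eisenbud--Goto-type bound is false in general by McCullough--Peeva, though for fixed $d$ and $C$ a uniform bound does exist; and the finiteness of the set of Hilbert polynomials of degree-$\le C$ irreducible subvarieties of $\mathbb{P}^d$ is itself a nontrivial boundedness theorem, not merely projectivity of each individual Hilbert scheme. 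Neither issue is fatal, since the lemma only demands a qualitative $O_{C,d}(1)$ bound.
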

\begin{proof}
This is essentially \cite[Theorem A.3]{BGT}. In \cite{BGT}, the authors prove the weaker statement that there exists a set of polynomials $g_1,\ldots,g_t$ such that $\sum\deg g_j = O_{d,C}(1)$ and $I(Z) = \sqrt{(g_1,\ldots,g_t)}$. However, a set of generators for $\sqrt{(g_1,\ldots,g_t)}$ can then be computed using Gr\"obner bases (see e.g.~\cite{CLS} for an introduction to Gr\"obner bases). The key result is due to Dub\'e \cite{dube}, which says that a reduced Gr\"obner basis for $(g_1,\ldots,g_t)$ can be found (for any monomial ordering) such that the sum of the degrees of the polynomials in the basis is $O_{d,C^\prime}(1),$ where $C^\prime= \sum\deg g_j$. Since $C^\prime=O_{d,C}(1)$, we conclude that the sum of the degrees of the polynomials in the Gr\"obner basis is $O_{d,C}(1)$. Once a Gr\"obner basis for $(g_1,\ldots,g_t)$ has been obtained, a set of generators for $\sqrt{(g_1,\ldots,g_t)}$ can then be computed (see e.g.~\cite[Section 9]{GTZ}).
\end{proof}

\subsection{Regular points, singular points, and smooth points}
We will often refer to the \emph{dimension} of an affine real algebraic variety. Informally, a real algebraic variety $X$ has dimension $d^\prime$ if there exists a subset of $X$ that is homeomorphic to the open $d^\prime$-dimensional cube, but there does not exist a subset of $X$ that is homeomorphic to the open $(d^\prime+1)$-dimensional cube. See \cite{BCR} for a precise definition of the dimension of a real algebraic variety.

Let $X\subset\RR^d$ be a variety of dimension $d^\prime$ and let
$\zeta\in X$. We say that $\zeta$ is a \emph{smooth} point of $X$ if
there is a Euclidean neighborhood $U\subset\RR^d$ containing $\zeta$
such that $X\cap U$ is a $d^\prime$-dimensional
embedded submanifold;
for example, see \cite[Section 3.3]{BCR}. In this paper we only consider smooth manifolds, and for brevity we refer to these simply as manifolds. Let $X_{\smooth}$ be the set of smooth points of $X$; then $X_{\smooth}$ is a $d^\prime$-dimensional smooth manifold.

Similarly, let $X\subset\CC^d$ be a variety of dimension $d^\prime$
and let $\zeta\in X$. We say that $\zeta$ is a \emph{smooth} point of
$X$ if there is a Euclidean neighborhood $U\subset\CC^{d}$ containing
$\zeta$ such that $X\cap U$ is a $d^\prime$-dimensional
embedded complex submanifold.
Again, let $X_{\smooth}$ be the set of smooth point of $X$; then $X_{\smooth}$ is a $d^\prime$-dimensional complex manifold.

Let $X\subset\CC^d$ be a variety of pure dimension $d^\prime$ (i.e., all irreducible components of $X$ have dimension $d^\prime$), and let $f_1,\ldots,f_\ell$ be polynomials that generate $I(X)$. We say that $\zeta\in X$ is a \emph{regular} point of $X$ if
\begin{equation} \label{JacobiMatrix}
\operatorname{rank}\left[\begin{array}{c}\nabla f_1(\zeta)\\ \vdots \\ \nabla f_\ell(\zeta)\end{array}\right]=d-d^\prime.
\end{equation}
Let $X_{\reg}$ be the set of regular points of $X$. If $\zeta\in X$ is not a regular point of $X$, then $\zeta$ is a \emph{singular} point of $X$. Let $X_{\sing}$ be the set of singular points of $X$.

\begin{lemma}[\cite{mumford}, Corollary 1.26]\label{equivalenceOfSmoothSing}
Let $X\subset\CC^d$ be a variety of pure dimension $d^\prime$. Then $X_{\smooth}=X_{\reg}$.
\end{lemma}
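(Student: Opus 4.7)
The plan is to establish the two inclusions $X_{\reg}\subseteq X_{\smooth}$ and $X_{\smooth}\subseteq X_{\reg}$ separately, since they require rather different tools. For $X_{\reg}\subseteq X_{\smooth}$, the natural ingredient is the holomorphic implicit function theorem. Given $\zeta\in X_{\reg}$, the Jacobian of the generators $f_1,\ldots,f_{\ell}$ of $I(X)$ has rank exactly $d-d'$ at $\zeta$, so I can select a sub-tuple $g_1,\ldots,g_{d-d'}$ whose gradients at $\zeta$ are linearly independent. The holomorphic implicit function theorem then produces a Euclidean neighborhood $U$ of $\zeta$ on which $Y:=\BZ(g_1,\ldots,g_{d-d'})\cap U$ is an embedded complex submanifold of dimension $d'$. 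Since $X\cap U\subseteq Y$, $X$ has pure complex dimension $d'$, and $Y$ is irreducible as a germ at $\zeta$ (being a manifold), the two must coincide on a possibly smaller neighborhood of $\zeta$. Hence $\zeta\in X_{\smooth}$.

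For the converse $X_{\smooth}\subseteq X_{\reg}$, I would pass through the local rings. If $\zeta\in X_{\smooth}$, the analytic local ring $\mathcal{O}^{\mathrm{an}}_{X,\zeta}$ is isomorphic to the convergent power series ring $\CC\{z_1,\ldots,z_{d'}\}$, a regular local ring of Krull dimension $d'$ whose maximal ideal satisfies $\dim_{\CC}(\mathfrak{m}/\mathfrak{m}^2)=d'$. The $\mathfrak{m}$-adic completion of the analytic local ring agrees with that of the algebraic local ring $\mathcal{O}^{\mathrm{alg}}_{X,\zeta}$, and both regularity and the invariant $\dim_{\CC}(\mathfrak{m}/\mathfrak{m}^2)$ are unchanged under completion; it follows that $\mathcal{O}^{\mathrm{alg}}_{X,\zeta}$ is regular of dimension $d'$. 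But the kernel of the Jacobian of $f_1,\ldots,f_{\ell}$ at $\zeta$ is naturally identified with the Zariski tangent space $(\mathfrak{m}_{\zeta}/\mathfrak{m}_{\zeta}^2)^{*}$, so this kernel has $\CC$-dimension $d'$ and the Jacobian has rank $d-d'$. Therefore $\zeta\in X_{\reg}$.

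The main obstacle I expect is in the second direction, specifically in importing information from the analytic setting back to a statement about polynomial gradients. The purely geometric observation that the normal space to $X$ at $\zeta$ has complex dimension $d-d'$ immediately yields only the inequality $\operatorname{rank} J\le d-d'$; extracting the matching lower bound genuinely requires the local-ring comparison sketched above, or equivalently an application of R\"uckert's analytic Nullstellensatz combined with a verification that enough polynomials in $I(X)$ produce linearly independent gradients at $\zeta$. Everything else in the argument is essentially bookkeeping about linear algebra of the Jacobian and the pure-dimension hypothesis.
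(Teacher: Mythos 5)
The paper never proves this lemma: it is imported verbatim from Mumford (Corollary 1.26) as a black box, so there is no in-paper argument to compare against. Your proposal is a correct, essentially textbook proof of the cited fact. The inclusion $X_{\reg}\subseteq X_{\smooth}$ via the holomorphic implicit function theorem is fine; the pure-dimension hypothesis enters exactly where you use it, namely to guarantee that every analytic branch of $X$ through $\zeta$ has dimension $d'$ and is therefore equal to the irreducible $d'$-dimensional manifold germ $Y$ containing it. For $X_{\smooth}\subseteq X_{\reg}$ you correctly locate the real difficulty: the naive geometric argument only yields $\operatorname{rank}\le d-d'$, and the matching lower bound requires regularity of the \emph{algebraic} local ring, for which your completion argument is the standard route (and, as far as I can tell, the one Mumford himself uses). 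The single step that silently carries weight is the claim that the $\mathfrak{m}$-adic completions of $\mathcal{O}^{\mathrm{an}}_{X,\zeta}$ and $\mathcal{O}^{\mathrm{alg}}_{X,\zeta}$ agree: this amounts to the extension of the reduced ideal $I(X)$ to the convergent power series ring being the full analytic ideal of the germ of $X$ at $\zeta$, which is where R\"uckert's Nullstellensatz (or faithful flatness plus preservation of reducedness) is genuinely needed. You flag this yourself in your last paragraph, so I consider the argument complete at the level of rigor the paper itself demands; also note that your identification of $\ker(\mathrm{Jacobian})$ with the Zariski tangent space is legitimate precisely because the paper's definition of $X_{\reg}$ uses generators of the full ideal $I(X)$ rather than an arbitrary set of defining equations.
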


\begin{lemma} \label{le:singular}
Let $X\subset\CC^d$ be a variety of degree $C$. Then $X_{\sing}$ is a variety of dimension strictly smaller than $\dim(X)$, and $\deg(X_{\sing})=O_{C,d}(1)$.
\end{lemma}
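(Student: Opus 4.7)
The plan is to reduce to the irreducible case and then apply the Jacobian criterion together with a Bezout-type degree estimate. First I would decompose $X = X_1 \cup \cdots \cup X_k$ into its irreducible components; since the paper defines $\deg X = \sum_i \deg X_i$, one has $k \le C$ and $\deg X_i \le C$. The singular locus then satisfies
$$
X_{\sing} \subset \bigcup_i (X_i)_{\sing} \,\cup\, \bigcup_{i \ne j} (X_i \cap X_j),
$$
so it suffices to bound the dimension and degree of each piece on the right.

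For a single irreducible component $X_i$ of dimension $d_i$, I would invoke Lemma \ref{varDefIdeal} to produce generators $f_1,\dots,f_\ell$ of $I(X_i)$ with $\sum_j \deg f_j = O_{C,d}(1)$, and then realize $(X_i)_{\sing}$ as the common zero set of the $f_j$ together with all $(d-d_i)\times(d-d_i)$ minors of the Jacobian matrix whose rows are $\nabla f_1,\dots,\nabla f_\ell$. Each such minor has degree $O_{C,d}(1)$, so $(X_i)_{\sing}$ is cut out by finitely many polynomials of bounded degree, and a standard Bezout-type bound---obtained for instance by intersecting with a generic linear subspace of complementary dimension---yields $\deg (X_i)_{\sing} = O_{C,d}(1)$. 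For the dimension drop, I would combine Lemma \ref{equivalenceOfSmoothSing} with the classical fact that the smooth locus of an irreducible complex variety is a non-empty Zariski open set: this makes $(X_i)_{\sing}$ a proper closed subvariety of the irreducible $X_i$, hence of strictly smaller dimension.

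For the cross terms, each intersection $X_i \cap X_j$ with $i \ne j$ is a proper closed subvariety of the irreducible variety $X_i$ (distinct irreducible components cannot contain one another), so $\dim(X_i \cap X_j) < \dim X_i \le \dim X$, and Bezout's inequality gives $\deg(X_i \cap X_j) \le C^2$. Summing over the $O(C^2)$ pairs completes the bound. The main technical input is the Bezout-style degree estimate used to pass from the degrees of the defining polynomials to the degree of the variety they cut out; this is standard but non-trivial, and estimates of exactly this form are already implicit in the paper's use of \cite{BGT} in Lemma \ref{varDefIdeal}, so no new ingredient is required.
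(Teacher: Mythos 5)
Your proof is correct and follows essentially the same route as the paper's: Lemma \ref{varDefIdeal} to obtain bounded-degree generators, vanishing of Jacobian minors to cut out the singular locus by boundedly many polynomials of bounded degree (hence bounded degree by a Bezout-type estimate), and the classical fact that the singular locus of an irreducible variety is a proper closed subset for the dimension drop. The only difference is that you first decompose into irreducible components and handle the pairwise intersections separately, whereas the paper applies the rank criterion directly to $X$; your version is in fact slightly more careful, since the paper's definition of $X_{\sing}$ is only given for pure-dimensional varieties.
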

\begin{proof}
By Lemma \ref{varDefIdeal}, there exist polynomials $f_1,\ldots,f_{\ell}$ such that $(f_1,\ldots,f_{\ell})=I(X)$ and $\sum_{j=1}^{\ell} \deg f_j = O_{C,d}(1)$. We have
\begin{equation}\label{expressionFOrZSing}
X_{\sing}= \bigg\{\zeta\in X\colon \operatorname{rank}\left[\begin{array}{c}\nabla f_1(\zeta)\\ \vdots \\ \nabla f_\ell(\zeta)\end{array}\right]<d-d^\prime\bigg\}.
\end{equation}
Equation \eqref{expressionFOrZSing} shows that $X_{\sing}$ can be written as the zero locus of $O_{\ell}(1)=O_{d,C}(1)$ polynomials, each of degree $O_{d,C}(1)$. Thus $X_{\sing}$ is a variety of degree $O_{d,C}(1)$. It remains to prove that $X_{\sing}$ has dimension strictly smaller than $\dim(X)$. This property can be found, for example, in \cite[Chapter I, Theorem 5.3]{hartshorne}.
\end{proof}

\section{Images of complex curves in a real variety} \label{sec:TangentBundle}

The goal of this section is to prove Lemma \ref{HyperSurfaceOneLeafThm}.
In this lemma we consider a hypersurface $V \subset \RR^4$, and study the behavior of complex curves $\gamma \subset \CC^2$ that satisfy $\iota(\gamma) \subset V$.

Let $M\subseteq\RR^n$ be a submanifold and $\zeta\in M$ a point.
We identify the tangent space $T_\zeta\RR^n$ with $\RR^n$ itself,
so $T_\zeta M$ becomes a linear subspace of $\RR^n$.
Analogously, let $N\subseteq\CC^n$ be a variety and
$x\in N_{\reg}$ a smooth point.
Then we identify $T_x\CC^n$ with the complex vector space $\CC^n$,
and $T_xN$ becomes a complex linear subspace of $\CC^n$.

Let $M$ be a $d$-dimensional smooth manifold. The \emph{tangent bundle} $TM$ is a $2d$-dimensional smooth manifold that is the disjoint union of the tangent spaces $\{T_\zeta\}_{\zeta\in M}$. Each element of the tangent bundle can be identified with a pair $(\zeta,v)$, where $\zeta\in M$ and $v\in T_{\zeta}M$.

Let $E\subset TM$ be a $(d+d^\prime)$-dimensional sub-manifold of $TM$. We say that $E$ is a $d^\prime$-dimensional \emph{sub-bundle} of $TM$ if for every $\zeta\in M$, we have $(\{\zeta\}\times T_\zeta M)\cap E=\{\zeta\}\times V$, where $V$ is a $d^\prime$-dimensional vector subspace of $ T_\zeta M=\RR^d$. We will call this subspace $E(\zeta)\subset\RR^d$. Intuitively, the vector space $E(\zeta)$ varies smoothly as the base-point $\zeta$ changes.

A \emph{vector field} on $M$ is a smooth function $X\colon M\to TM$
that assigns an element of $T_\zeta M$ to each point $\zeta\in M$. We
will abuse notation slightly and write $X(\zeta)=v$ to mean
$X(\zeta)=(\zeta,v)\in TM$. If $E$ is a sub-bundle of $TM$ and
$X\colon M\to TM$ is a vector field, we say that $X$ \emph{takes
  values in} $E$ if $X(\zeta)\in E$ for all $\zeta\in M$.


The following is a variant of the Picard--Lindel\"of theorem (e.g., see \cite{KP10}).

\begin{theorem}\label{PicardLindelof}
Let $X$ be a smooth vector field on a manifold $M$ and $\zeta\in M$ a point where $X(\zeta) \ne 0$.
Then for any sufficiently small $\varepsilon>0$ there exists a unique smooth arc $\alpha:[-\varepsilon,\varepsilon]\to M$ starting at $\zeta$ whose tangent vectors are in $X$; that is, a unique arc $\alpha$ that solves the initial
  value problem
  \begin{equation}
    \label{eq:1}
    \alpha(0)=\zeta,
    \quad\quad
    \dot\alpha(t) = X\big(\alpha(t)\big)
    \quad
    \text{for all }t\in[-\varepsilon,\varepsilon].
  \end{equation}
\end{theorem}

We are now ready to show that if $Z$ is a bounded-degree hypersurface in $\RR^4$, then for a generic point $z\in Z$ there is at most one irreducible curve $\gamma\subset\CC^2$ that satisfies $z\in \iota(\gamma) \subset Z$.

\begin{lemma}\label{HyperSurfaceOneLeafThm}
Let $P\in\RR[x_1,y_1,x_2,y_2]$ be a polynomial of degree at most $D$.
Then for every
$p\in \BZ_{\RR}(P)\backslash Z_{\CC}(P)_{\sing}$,
there is at most one irreducible complex curve
$\gamma\subset\CC^2$ with $p\in\iota(\gamma_{\reg})$ and
$\iota(\gamma)\subset \BZ_{\RR}(P).$
\end{lemma}
\begin{proof}
We set $M = \BZ_\RR(P)\setminus Z_{\CC}(P)_{\sing}$
and note that $M$, if non-empty,
is a three-dimensional submanifold in $\RR^4$.
The isomorphism $\iota$ carries the multiplication by $i$ in
$\CC^2$ into the linear transformation
\[ J:\RR^4\to\RR^4,\quad\quad J(x_1,y_1,x_2,y_2) = (-y_1,x_1,-y_2,x_2). \]

Notice that for any vector $v\in\RR^4$ we have $J(J(v)) = -v$.
Thus, for any linear subspace $V\subset \RR^4$ we have $J(J(V)) = V$.
Since $J$ corresponds to multiplication by $i$ in $\CC^2$, a linear subspace $V$ is $J$-invariant if and only if $V=\iota(V')$ for some complex subspace $V'\le\CC^2$.
In particular, all $J$-invariant subspaces are even dimensional.

For every point $p\in M$ we define the linear subspace $E_p = T_p M \cap J^{-1}(T_p M)$.
Intuitively, $E_p$ is the largest subset of $T_p M$ that is invariant under $J$.
Since the linear subspace $T_p M$ is three-dimensional, it cannot be $J$-invariant.
This implies that $J^{-1}(T_p M)$ is a different three-dimensional subspace,
and thus $E_p$ is a two-dimensional linear subspace.
As $p$ varies, the union of the $p \times E_p$ forms a two-dimensional
sub-bundle $E$ of the tangent bundle $TM$.

Fix a point $p\in M$, and choose a vector field $X$ defined
in an open neighbourhood $U\subseteq M$ of $p$
which takes values in $E$, and $X(p)\ne0$.
By Theorem \ref{PicardLindelof} there is a unique arc
$\alpha:[-\varepsilon,\varepsilon]\to\gamma_{\reg}$
that solves \eqref{eq:1}.

Consider an irreducible complex curve $\gamma\subset\CC^2$
that satisfies
$\iota(\gamma) \subset \BZ_{\RR}(P)$ and
$p\in \iota(\gamma_{\reg})$.
For any point $q\in\iota(\gamma_{\reg})$
the tangent space $T_{\iota^{-1}(q)}\gamma$
is a complex line in $\CC^2$,
hence
$T_{q}\iota(\gamma)=\iota\big(T_{\iota^{-1}(q)}\gamma)$
is a $J$-invariant 2-plane in $\RR^4$
which is contained in $T_{q}M$.
This implies that $T_q\iota(\gamma)=E_q$,
so $X(q)$ is tangent to $\iota(\gamma)$.
By applying Theorem \ref{PicardLindelof} to the manifold
$\iota(\gamma_{\reg})$, and the restriction of $X$ to $\iota(\gamma_{\reg})$,
we obtain an arc
$\beta:[-\varepsilon',\varepsilon']\to\iota(\gamma_{\reg})$
that solves the same equation \eqref{eq:1}.
While we might get that $\varepsilon' \neq \varepsilon$,
the uniqueness of the solution implies that $\alpha$ and $\beta$
must have a common sub-arc $\alpha'$ around $p$.
Since $\alpha'$ is an infinite set,
the complex curve $\gamma$ must be the Zariski closure of $\iota^{-1}(\alpha')$.

Suppose now that $\alpha''\subset\alpha$ is any sub-arc
around $p$ such that the Zariski closure of $\iota^{-1}(\alpha'')$
is an irreducible curve $\gamma''\subset\CC^2$.
By the above argument $\gamma$ is the Zariski closure of
$\iota^{-1}(\alpha'\cap\alpha'')$,
hence $\gamma=\gamma''$.
This proves that $\gamma$, if exists, is uniquely determinded by $\alpha$.
\end{proof}

\section{Proof of Theorem \ref{th:comPS}}\label{proofOfThmSection}
We are now ready to prove Theorem \ref{th:comPS}. For the reader's convenience we will restate it here.
\begin{comPStheorem}
For each $k\geq 1,\ D\geq 1,\ s\geq 1,$ and $\epsilon>0$, there is a constant $C=C_{\epsilon,D,s,k}$ such that the following holds. Let $\pts\subset\CC^2$ be a set of $m$ points and let $\curves$ be a set of $n$ complex algebraic curves of degree at most $D$. Suppose that $(\pts,\curves)$ has $k$ degrees of freedom and multiplicity type $s$. Then
\begin{equation*}
I(\pts,\curves) \le C\big(m^{\frac{k}{2k-1}+\eps}n^{\frac{2k-2}{2k-1}}+m+n\big).
\end{equation*}
\end{comPStheorem}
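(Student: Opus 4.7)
The plan is to prove Theorem \ref{th:comPS} by induction on $m+n$, using the embedding $\iota\colon\CC^2\to\RR^4$ together with polynomial partitioning in $\RR^4$. The outer structure follows the Solymosi-Tao framework, with Corollary \ref{irredHyperSurfaceOneLeafCor} replacing the transversality input previously required. Fix $\eps,D,s,k$ and induct on $m+n$; when $m+n$ is below a threshold depending on these parameters the bound is trivial, so we focus on the inductive step in which the middle term $m^{k/(2k-1)+\eps}n^{(2k-2)/(2k-1)}$ is the dominant one we must control.

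For the inductive step I would choose a degree parameter $r$ as a function of $m$, $n$, and $\eps$, and apply polynomial partitioning to obtain $P\in\RR[x_1,y_1,x_2,y_2]$ of degree at most $r$ such that $\RR^4\setminus\BZ_\RR(P)$ has $O(r^4)$ connected open cells, each containing $O(m/r^4)$ points of $\iota(\pts)$. Split $\pts=\pts_0\cup\pts_1$ with $\pts_0=\{p\in\pts\colon\iota(p)\in\BZ_\RR(P)\}$, and $\curves=\curves_0\cup\curves_1$ with $\curves_0=\{\gamma\in\curves\colon\iota(\gamma)\subset\BZ_\RR(P)\}$. Each $\iota(\gamma)$ is a real $2$-surface of bounded degree, so it meets $O_D(r^2)$ cells; writing $m_\Omega,n_\Omega$ for the points and curves meeting a cell $\Omega$, we have $m_\Omega\le O(m/r^4)$ and $\sum_\Omega n_\Omega=O_D(nr^2)$. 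The inductive hypothesis applied cell by cell, combined via H\"older's inequality, handles $I(\pts_1,\curves)$; for a suitable polynomial choice of $r$ in $m$ and $n$, this contribution absorbs at most half the target bound.

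The new ingredient enters in bounding $I(\pts_0,\curves_0)$. By Corollary \ref{irredHyperSurfaceOneLeafCor} applied to $P$, there is an exceptional variety $Y\subset\CC^4$ of complex dimension at most two and degree depending only on $r$, such that every point of $\BZ_\RR(P)\setminus Y(\RR)$ is a smooth point of at most one curve from $\curves_0$. This yields
\begin{equation*}
I(\pts_0\setminus Y(\RR),\curves_0)\;\le\;|\pts_0|+O_D(n),
\end{equation*}
where the second term collects incidences at the $O_D(1)$ singular points of each curve. The curves in $\curves_1$ meet $\BZ_\RR(P)$ only in proper subvarieties of $\iota(\gamma)$, contributing $O_D(r)$ isolated intersection points per curve via Bezout; any $1$-dimensional components of $\iota(\gamma)\cap\BZ_\RR(P)$, together with the residual incidences $I(\pts_0\cap Y(\RR),\curves)$, must be handled by a further inductive application of the theorem inside the lower-dimensional real varieties $\BZ_\RR(P)$ and $Y(\RR)$.

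The main obstacle is this cascade of recursions: each successive partition takes place inside an ambient variety of smaller dimension, and requires either a version of Corollary \ref{irredHyperSurfaceOneLeafCor} adapted to the lower-dimensional setting or an extension of the induction hypothesis that tracks incidences with points constrained to a variety. After $O(1/\eps)$ levels, with a tiny loss introduced at each level, the residual incidences are absorbed by the $\eps$ slack in $m^{k/(2k-1)+\eps}$, and choosing $r$ at each level to balance contributions produces exactly the $\eps$ loss in \eqref{comPSThmEqn} and closes the induction.
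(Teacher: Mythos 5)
Your outer structure (induction on $m+n$, partitioning in $\RR^4$, H\"older over the cells, and Corollary \ref{irredHyperSurfaceOneLeafCor} to show that curves contained in $\BZ_\RR(P)$ contribute at most one incidence per generic point) matches the paper. The gap is in the part you explicitly leave open: the incidences between points of $\pts_0$ and curves whose image is \emph{not} contained in $\BZ_\RR(P)$, together with the residual incidences at points of $Y(\RR)$. You correctly observe that $\iota(\gamma)\cap\BZ_\RR(P)$ can have one-dimensional components (it is not just $O_D(r)$ isolated points as a naive B\'ezout count would suggest), and you propose to handle these by a cascade of recursive partitionings inside lower-dimensional varieties, conceding that this requires either a lower-dimensional analogue of Corollary \ref{irredHyperSurfaceOneLeafCor} or a strengthened induction hypothesis. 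That cascade is not set up, and the claim that $O(1/\eps)$ levels of it are absorbed by the $\eps$ slack is asserted rather than proved; as written, the induction does not close.

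The paper avoids the cascade entirely with a much simpler device (Lemma \ref{1DcurveIncidenceBd}): since each set $\iota(\gamma)\cap Y(\RR)$ (respectively $\iota(\gamma)\cap\BZ_\RR(P)$ for $\gamma\notin\curves_0$) is a real algebraic set of dimension at most one and bounded degree, a \emph{generic linear projection} $\pi\colon\RR^4\to\RR^2$ sends these sets to bounded-degree plane algebraic curves while preserving the $k$ degrees of freedom and (up to constants) the multiplicity type. One then applies the real Pach--Sharir bound (Theorem \ref{th:PSAlg}) in $\RR^2$ once, obtaining $O\bigl(m_0^{k/(2k-1)}n^{(2k-2)/(2k-1)}+m_0+n\bigr)$ with no further recursion and no additional $\eps$ loss. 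I recommend replacing your recursive cascade with this projection argument; without it, the step you flag as ``the main obstacle'' is genuinely unresolved. (A secondary point: the paper takes $r$ to be a large \emph{constant} depending on $\eps$, with the factor $r^{-\eps}$ from the cellular term closing the induction; choosing $r$ polynomially in $m,n$ as you suggest is a different balancing scheme and would require you to rework how the cellular contribution is absorbed.)
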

\begin{proof}
We will make crucial use of the Guth-Katz polynomial partitioning technique from \cite[Theorem 4.1]{Guth}.
\begin{theorem}\label{th:partition}
Let $\pts$ be a set of $m$ points in $\RR^d$. For each $r\geq 1$, there exists a polynomial $P$ of degree at most $r$ such that $\RR^{d}\backslash\BZ(P)$ is a union of $O(r^d)$ connected components (cells), and each cell contains $O(m/r^d)$ points of $\pts$.
\end{theorem}

Since the curves of $\curves$ have $k$ degrees of freedom, the K\H ov\' ari-S\'os-Tur\'an theorem (e.g., see
\cite[Section 4.5]{Mat02}) implies $I(\pts,\curves) = O(mn^{1-1/k} + n)$. When $m=O(n^{1/k})$, this implies the bound $I(\pts,\curves) = O(n)$. Thus, we may assume that
\begin{equation} \label{eq:kst}
n=O\left(m^k\right).
\end{equation}

We will prove by induction on $m+n$ that
\begin{equation*}
I(\pts,\curves) \le \alpha_{1} m^{\frac{k}{2k-1}+\varepsilon}n^{\frac{2k-2}{2k-1}}+\alpha_{2}(m+n),
\end{equation*}
where $\alpha_{1},\alpha_{2}$ are sufficiently large constants. The base case where $m+n$ is small can be handled by choosing sufficiently large values of $\alpha_{1}$ and $\alpha_{2}$. In practice, we will bound $I(\iota(\pts),\iota(\curves))$. Since $\iota\colon\CC^2\to\RR^4$ is a bijection, $I(\pts,\curves)=I(\iota(\pts),\iota(\curves))$.

\paragraph{Partitioning $\RR^4$.}
Let $P$ be a partitioning polynomial of degree at most $r$, as described in Theorem \ref{th:partition}.
The constant $r$ is taken to be sufficiently large, as described below.
The asymptotic relations between the various constants in the proof are
\begin{equation*}
 2^{1/\varepsilon} \ll r \ll \alpha_{2} \ll \alpha_{1}.
\end{equation*}

Let $\Omega_1, \ldots, \Omega_{\ell}$ be the cells of the partition; we have $\ell=O(r^4)$. Let $\vrts_i$ be the set of varieties from $\iota(\curves)$ that intersect the interior of $\Omega_i$ and let $\pts_i$ be the set of points $p\in\pts$ such that $\iota(p)\in \Omega_i$. Let $m_j=|\pts_j|$, $m' = \sum_{j=1}^{\ell} m_j$, and $n_j=|\vrts_j|$. By Theorem \ref{th:partition}, $m_j=O( m/r^4)$ for every $1\le j \le \ell$.

By \cite[Theorem A.2]{ST}, every variety from $\vrts$ intersects $O(r^2)$ cells of $\RR^4\setminus \BZ(P)$.
Therefore, $\sum_{j=1}^{\ell} n_j = O\left(nr^2\right)$. Combining this with H\"older's inequality implies
\begin{align*}
\sum_{j=1}^{\ell} n_j^{\frac{2k-2}{2k-1}} &
=O\left(\left(nr^{2}\right)^{\frac{2k-2}{2k-1}}\ell^{\frac{1}{2k-1}}\right) =O\left(n^{\frac{2k-2}{2k-1}}r^{\frac{4k}{2k-1}}\right).
\end{align*}

By the induction hypothesis, we have
\begin{align*}
\sum_{j=1}^{\ell} I(\pts_j,\vrts_j) &\le \sum_{j=1}^{\ell} \left(\alpha_{1} m_j^{\frac{k}{2k-1}+\eps}n_j^{\frac{2k-2}{2k-1}}+\alpha_{2}(m_j+n_j)\right) \\
&\le O\left(\alpha_{1} m^{\frac{k}{2k-1}+\eps}r^{-\frac{4k}{2k-1}-4\eps} \sum_{j=1}^{\ell} n_j^{\frac{2k-2}{2k-1}} \right) + \sum_{j=1}^{\ell}\alpha_{2}(m_j+n_j) \\[2mm]
&\le O\left(\alpha_{1} r^{-\varepsilon}m^{\frac{k}{2k-1}+\eps}n^{\frac{2k-2}{2k-1}}   \right) + \alpha_{2}\left(m'+O\left(nr^2\right)\right).
\end{align*}

By \eqref{eq:kst}, we have $n^{\frac{1}{2k-1}}=O\left(m^{\frac{k}{2k-1}}\right)$, which in turn implies $n=O\left(m^{\frac{k}{2k-1}}n^{\frac{2k-2}{2k-1}}\right)$.
Thus, when $\alpha_{1}$ is sufficiently large with respect to $r$ and $\alpha_{2}$, we have
\begin{equation*}
\sum_{j=1}^{\ell} I(\pts_j,\vrts_j) = O\left(\alpha_{1} r^{-\varepsilon}m^{\frac{k}{2k-1}+\varepsilon}n^{\frac{k}{2k-1}} \right) + \alpha_{2}m'.
\end{equation*}

By taking $r$ to be sufficiently large with respect to $\eps$ and the implicit constant in the $O$-notation, we have
\begin{equation*}
\sum_{j=1}^{\ell} I(\pts_j,\vrts_j) \le \frac{\alpha_{1}}{2} m^{\frac{k}{2k-1}+\eps}n^{\frac{2k-2}{2k-1}} + \alpha_{2}m',
\end{equation*}
i.e.,
\begin{equation}\label{eq:incCells}
I(\iota(\pts)\backslash\BZ_{\RR}(P),\iota(\curves))\le \frac{\alpha_{1}}{2} m^{\frac{k}{2k-1}+\eps}n^{\frac{2k-2}{2k-1}} + \alpha_{2}m'.
\end{equation}

\paragraph{Incidences on the partitioning hypersurface.}
It remains to bound incidences with points that are on the partitioning hypersurface $\BZ(P)$. To do this, we will make use of the point-curve bound from Theorem \ref{th:PSAlg}.
\begin{lemma}\label{1DcurveIncidenceBd}
Let $\pts\subset\CC^2$. Let $\curves$ be a set of complex curves of degree at most $C_0$ such that $(\pts,\curves)$ has $k$ degrees of freedom and multiplicity type $s$. Let $Y\subset\CC^4$ be an algebraic variety of degree at most $C_1$. Suppose that for each $\gamma\in\curves$, the intersection $\iota(\gamma)\cap Y(\RR)$ is a real algebraic variety of dimension at most one. Then
\begin{equation}\label{boundOnCurveIncidences}
I(\iota(\pts)\cap Y(\RR),\ \iota(\curves))= O(|\pts|^{k/(2k-1)}|\curves|^{(2k-2)/(2k-1)}+|\pts|+|\curves|),
\end{equation}
where the implicit constant depends on $k$, $s$, $C_0$, and $C_1$.
\end{lemma}
\begin{proof}
Let $\pi\colon\RR^4\to\RR^2$ be a generic linear transformation (see Section \ref{ssec:Quant}). Then for each $\gamma\in\curves,$ $\pi(\iota(\gamma)\cap Y(\RR))\subset\RR^2$ is the zero set of a non-zero polynomial of degree $O_{C_0,C_1}(1)$ (e.g., see \cite[Section 5.1]{ST}); each set of this form is a union of plane curves and a finite set of points.

Let $\Gamma=\{\pi\big(\iota(\gamma)\cap Y(\RR)\big)\colon \gamma\in\curves\}$. Then $\Gamma$ is a finite set of (not necessarily irreducible) plane algebraic curves and isolated points, and $\big(\pi(\iota(\pts)),\Gamma\big)$ has $k$ degrees of freedom and multiplicity type $O_{s,C_0,C_1}(1)$.

By Theorem \ref{th:PSAlg},
\begin{equation}\label{planarCurveBound}
I(\pi(\iota(\pts)),\Gamma)=O(|\pts|^{k/(2k-1)}|\curves|^{(2k-2)/(2k-1)}+|\pts|+|\curves|),
\end{equation}
where the implicit constant depends on $k,$ $s$, $C_0,$ and $C_1$. Since each incidence in $I(\iota(\pts)\cap Y(\RR),\ \iota(\curves))$ appears as an incidence in $I(\pi(\iota(\pts)),\Gamma)$, \eqref{planarCurveBound} implies \eqref{boundOnCurveIncidences}.
\end{proof}

We are now ready to bound the number of incidences involving points lying on $\BZ_{\RR}(P)$. Let $\pts_0= \iota(\pts) \cap \BZ_{\RR}(P)$, let $m_0=|\pts_0|=m-m'$, and let $\curves_0=\{\gamma\in\curves\colon\iota(\gamma)\subset\BZ_{\RR}(P)\}$.
By Lemma \ref{le:singular}, for each $\gamma\in\curves$, we have that
  $\iota(\gamma)_{\sing}=\iota(\gamma_{\sing})$ is a finite set of size
  $O_D(1)$, hence
  \begin{equation}\label{incidencesSing}
    |\{(p,\gamma)\in \pts_0 \times \curves :\,
    \iota(p)\in\iota(\gamma)_{\sing}\}|=O_D(n).
  \end{equation}

Let $Y$ be the real part of $Z_{\CC}(P)_{\sing}$.
We apply Lemma \ref{HyperSurfaceOneLeafThm} to $P$, to obtain
\begin{equation}\label{incidencesOnSmoothLeafs}
|\{(p,\gamma)\in\pts_0\times \curves_0 :\, \iota(p)\in \BZ_{\RR}(P)\backslash Y,\ \iota(p)\in \iota(\gamma)_{\reg}\}|\leq m_0.
\end{equation}

 Lemma \ref{le:singular} implies that $Z_{\CC}(P)_{\sing}$ is a variety of degree $O_{r}(1)$ and dimension at most two.
 This in turn implies that $O_{r}(1)$ varieties of the form $\iota(\gamma)$ are contained in $Y$.
 Thus
\begin{equation}\label{incidencesSurfaceInY}
|\{(p,\gamma)\in\pts_0\times \curves_0 :\, \iota(p)\in \iota(\gamma)_{\reg},\ \iota(\gamma)\subset Y\}|=O_{r}(m_0).
\end{equation}

Let $\curves' = \curves \setminus \curves_0$.
It remains to control the size of the sets
\begin{equation*}
\{(p,\gamma)\in\pts_0\times \curves' :\, \iota(p)\in \iota(\gamma)_{\reg} \}
\end{equation*}
and
\begin{equation*}
\{(p,\gamma)\in\pts_0\times \curves_0 :\, \iota(\gamma)\not\subset Y, \, \iota(p)\in (\iota(\gamma)_{\reg} \cap Y) \}.
\end{equation*}

By Lemma \ref{1DcurveIncidenceBd}, both of these sets have size
\begin{equation}\label{incidencesSurfaceNotInZiOrY}
O(m_0^{k/(2k-1)}n^{(2k-2)/(2k-1)}+m_0+n).
\end{equation}

Combining \eqref{incidencesSing}, \eqref{incidencesOnSmoothLeafs}, \eqref{incidencesSurfaceInY}, and \eqref{incidencesSurfaceNotInZiOrY} implies
\begin{align*}
I(\pts_0,\iota(\curves)) =  O(m_0^{k/(2k-1)}n^{(2k-2)/(2k-1)}+m_0+n).
\end{align*}

Taking $\alpha_1,\alpha_2$ to be sufficiently large with respect to the constant of the $O$-notation, we have
\begin{equation}\label{boundPts0Vrts}
I(\iota(\pts)\cap\BZ_{\RR}(P), \iota(\curves)) \le \frac{\alpha_1}{2}m^{k/(2k-1)}n^{(2k-2)/(2k-1)}+\alpha_2(m_0+n).
\end{equation}
Combining \eqref{boundPts0Vrts} and \eqref{eq:incCells} completes the induction.
\end{proof}

\paragraph{Acknowledgements.}
The authors would like to thank Orit Raz and Frank de Zeeuw for a
discussion that pushed us to work on this problem, and L\'aszl\'o Lempert
for finding an error in an earlier version of the proof.
We would like to thank the anonymous referee for numerous suggestions and recommendations. Part of this research was performed while the authors were visiting the Institute for Pure and Applied Mathematics (IPAM) in Los Angeles, which is supported by the National Science Foundation. The second author was supported by National Research, Development and Innovation Office (NKFIH) Grants K115799, K120697, ERC\_HU\_15 118286. 
The third author was supported in part by an NSF Postdoctoral Fellowship.

\bibliographystyle{abbrv}
\bibliography{complexPSBiblio}
\end{document}